\newtheorem{theorem}{Theorem}[section]
\newtheorem{lemma}[theorem]{Lemma}
\newtheorem{cor}[theorem]{Corollary}
\theoremstyle{definition}
\newtheorem{remark}[theorem]{Remark}
\newcommand{\N}{\mathbb{N}}
\newcommand{\p}{{\mbox{$[p]$}}}
\newcommand{\pd}{{\mbox{$[p]'$}}}
\newcommand{\scp}{{\mbox{$\scriptstyle [p]$}}}
\newcommand{\scpd}{{\mbox{$\scriptstyle [p]'$}}}
\DeclareMathOperator{\ad}{ad}
\DeclareMathOperator{\Hom}{Hom}
\DeclareMathOperator{\asoc}{ASoc}
\title{Faithful completely reducible representations of modular Lie algebras}
\author{Donald W. Barnes}
\address{1 Little Wonga Rd.\\Cremorne NSW 2090\\Australia\\}
\email{D.Barnes@maths.usyd.edu.au}
\subjclass[2010]{Primary 17B50}
\keywords{modular Lie algebras, faithful representations}
\begin{document}

\begin{abstract} Let $L$ be a Lie algebra of dimension $n$ over a field $F$ of characteristic $p > 0$.  I prove  the existence of a faithful completely reducible $L$-module of dimension less than or equal to $p^{n^2-1}$.
\end{abstract}

\maketitle

\section{Introduction}

Let $L$ be a Lie algebra of dimension $n$ over the field $F$.  The Ado-Iwasawa Theorem asserts that there exists a faithful finite-dimensional $L$-module $V$.  There are several extensions of this result which assert the existence of such a module $V$ with various additional properties. See, for example,  Hochschild \cite{Hoch}, Barnes \cite{extras}. Of importance for this paper is Jacobson's Theorem, \cite[Theorem 5.5.2]{SF} that every finite-dimensional Lie algebra $L$ over a field $F$ of characteristic $p>0$ has a finite-dimensional faithful completely reducible module $V$.  None of these results sets a bound to the dimension of $V$, unlike the Leibniz algebra analogue \cite{faithful} which asserts for a Leibniz algebra of dimension $n$, the existence of a faithful Leibniz module of dimension less than or equal to $n+1$.  This raises the question ``Is there an analogous strengthening of the Ado-Iwasawa Theorem?" that is, ``For a field $F$, does there exist a function $f:\N \to \N$ such that every Lie algebra of dimension $n$ over $F$ has a faithful module of dimension less than or equal to $f(n)$?"  The main purpose of this paper is to prove the following strengthening of Jacobson's Theorem, thereby  answering this question in the affirmative for fields $F$ of non-zero characteristic.

\begin{theorem}\label{main} Let $F$ be a field of characteristic $p>0$ and let $L$ be a Lie algebra of dimension $n$ over $F$.  Then $L$ has a faithful completely reducible module $V$ with $\dim(V) \le p^{n^2-1}$.
\end{theorem}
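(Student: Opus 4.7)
The plan is to reduce the question about $L$ to the analogous question for a restricted Lie algebra of controlled dimension, and then to obtain the dimension bound from the structure of reduced enveloping algebras.

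First, I would construct a restricted Lie algebra $L^*$ containing $L$ with $\dim L^*$ on the order of $n^2$. The natural construction is a $p$-envelope: take the restricted subalgebra of $\mathfrak{gl}(L)$ generated by $\ad(L)$, together with enough additional operators to capture $Z(L)$ faithfully. A faithful completely reducible $L^*$-module restricts to a faithful completely reducible $L$-module, because $L^*$ is generated by $L$ as a restricted Lie algebra, so the image of the restricted enveloping algebra of $L^*$ in $\operatorname{End}(V)$ agrees with the associative subalgebra generated by $\rho(L)$ on any module $V$.

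Second, for the restricted algebra $L^*$ I would invoke the decomposition of its representation theory by $p$-characters $\chi \colon L^* \to F$. Each reduced enveloping algebra $u_\chi(L^*)$ is finite-dimensional of dimension $p^{\dim L^*}$. The semisimple quotient $u_\chi(L^*)/J(u_\chi(L^*))$, or a specific semisimple submodule extracted via the $\asoc$ operator declared in the preamble, yields a completely reducible module. Combining finitely many such modules, chosen so that their annihilators in $L$ have trivial intersection—at most $n$ suffice, by linear algebra on the lattice of ideals—produces a faithful completely reducible $L$-module. Jacobson's theorem already guarantees that such a module exists, so the real work is in controlling its dimension.

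Third, I would verify that the total dimension of the constructed module is at most $p^{n^2-1}$. The main obstacle will be the sharpness of the exponent: getting $p^{n^2 - 1}$ rather than the naive $p^{n^2}$ requires either improving the bound on $\dim L^*$ to at most $n^2 - 1$, or a refined accounting of the simple modules actually needed to separate points of $L$. Handling the center $Z(L)$ without incurring the full $n^2$ cost---most plausibly by observing that one scalar direction in the $p$-envelope is redundant, analogous to how $\mathfrak{sl}_n$ has dimension $n^2 - 1$ inside $\mathfrak{gl}_n$---is the delicate technical point that the proof must address.
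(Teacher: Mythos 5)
Your high-level strategy is the same as the paper's: pass to a $p$-envelope of dimension about $n^2$, solve the problem there, and descend to $L$. But the proposal leaves genuine gaps at exactly the two places where all the work lies, and you acknowledge as much ("the real work is in controlling its dimension", "the delicate technical point that the proof must address"). First, you give no actual bound on $\dim L^*$. The paper proves (Theorem \ref{env}) that a minimal $p$-envelope has dimension at most $n(n-d+1)$ where $d$ is the dimension of an abelian ideal; to make this $\le n^2$ one needs $d\ge 1$, which the paper arranges by first reducing (via induction on $n$ and the socle) to the case where $L$ has a unique minimal ideal and that ideal is abelian. Without an abelian ideal the same method only yields $n(n+1)$, so this reduction is not cosmetic. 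Second, your treatment of the restricted case --- take semisimple quotients of reduced enveloping algebras $u_\chi(L^*)$ and combine at most $n$ of them --- gives a bound of roughly $n\cdot p^{\dim L^*}$, which is far above $p^{n^2-1}$. The paper's Theorem \ref{res} instead runs an induction on $\dim L^*$: two distinct minimal $[p]$-ideals give $2p^{m-2}\le p^{m-1}$ via quotients, and a unique minimal $[p]$-ideal $A$ (forced to be abelian and minimal as an ordinary ideal by a careful choice of $p$-operation, Corollary \ref{pid}) is handled by a single $c$-induced module $u(L,c)\otimes_{u(A,c|A)}W$ of dimension exactly $p^{\dim(L^*/A)}$, whose faithfulness is checked directly on the basis $e_1^{r_1}\cdots e_k^{r_k}\otimes w$. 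Nothing in your sketch produces a single module of dimension $p^{m-1}$.

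The descent step also deserves more care than you give it. Your claim that the associative algebra generated by $\rho(L^*)$ equals the one generated by $\rho(L)$ on \emph{any} module is delicate for non-restricted modules over non-algebraically-closed fields, where $\rho(x^{\scp})-\rho(x)^p$ is only known to lie in the centralizer of the image, not obviously in the image itself. The paper sidesteps this entirely: it picks an irreducible $L^e$-summand $V_0$ on which the unique minimal ideal $A$ acts non-trivially, takes any irreducible $L$-submodule $V_1\subseteq V_0$, and uses $V_1^A\subseteq V_0^A=0$ together with the uniqueness of $A$ to conclude $V_1$ is already a faithful irreducible $L$-module. (Also, $\asoc$ in the paper denotes the abelian socle of a Lie algebra, not a module-theoretic socle operator.) In short: right skeleton, but the quantitative content --- Theorems \ref{res} and \ref{env} and the structural reduction that makes them applicable --- is missing.
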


In all that follows, $F$ is a field of characteristic $p>0$ and $L$ is a Lie algebra of dimension $n$ over $F$.

\section{Restricted Lie algebras.}

A restricted Lie algebra (see \cite[Chapter 2]{SF}) is a Lie algebra $L$ together with a $p$-operation, that is, a map $\p: L \to L$ such that for $a,b \in L$ and $\lambda \in F$, we have $\ad(a^\scp) = \ad(a)^p$, $(\lambda a)^\scp = \lambda^p a^\scp$  and
$$(a+b)^\scp = a^\scp + b^\scp + \sum_{i=1}^{p-1}s_i(a,b),$$
where the $s_i(a,b)$ are defined by 
$$\bigl(\ad(a \otimes X+b \otimes 1)\bigr)^{p-1}(a \otimes 1) = \sum_{i=1}^{p-1}i s_i(a,b) \otimes X^{i-1}$$
in $L \otimes_F F[X]$.  

For convenience of reference, we list here some properties of $p$-operations.

\begin{lemma} \label{siab} Let $(L,\p)$ be restricted Lie algebra.  Then
\begin{enumerate}
\item If $[a,b]=0$, then $[a^{\scp^r},b^{\scp^s}]=0$.  In particular, $[a^{\scp^r}, a^{\scp^s}]=0$.
\item  If $[a,b]=0$, then $(a+b)^\scp = a^\scp + b^\scp$.
\item For all $a,b \in L$, we have $s_i(a,b) \in L'$.
\end{enumerate}
\end{lemma}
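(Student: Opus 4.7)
My plan is to treat the three items in order, each reducing to a direct manipulation of the defining identity $\ad(a^{\scp}) = \ad(a)^p$ or of the formal expansion that defines the $s_i(a,b)$.

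For (1), the idea is to leverage $\ad(a^{\scp^r}) = \ad(a)^{p^r}$, which follows by iterating the first axiom. If $[a,b]=0$, then $\ad(a)(b)=0$, so $\ad(a)^{p^r}(b) = 0$, giving $[a^{\scp^r},b] = 0$. Now apply the same argument with $b$ and $a^{\scp^r}$ in place of $a$ and $b$: since $[a^{\scp^r}, b] = 0$ we get $[a^{\scp^r}, b^{\scp^s}] = 0$. The special case $[a^{\scp^r}, a^{\scp^s}] = 0$ is the specialization to $b = a$.

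For (2), the natural approach is to return to the definition of $s_i(a,b)$ and evaluate $\bigl(\ad(a\otimes X + b\otimes 1)\bigr)^{p-1}(a\otimes 1) = (X\ad(a) + \ad(b))^{p-1}(a\otimes 1)$ under the assumption $[a,b]=0$. Since $\ad([a,b]) = [\ad(a), \ad(b)]$, the operators $\ad(a)$ and $\ad(b)$ commute, so the binomial theorem applies and gives $\sum_{k=0}^{p-1}\binom{p-1}{k} X^k\ad(a)^k\ad(b)^{p-1-k}(a)$. Every term with $k \ge 1$ vanishes because $\ad(a)$ and $\ad(b)$ commute and $\ad(a)(a)=0$, while the $k=0$ term $\ad(b)^{p-1}(a)$ vanishes because $\ad(b)(a) = -[a,b]=0$. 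Thus every $s_i(a,b)$ is zero, and (2) follows from the $p$-operation identity for $(a+b)^\scp$.

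For (3), the observation is that expanding $(X\ad(a) + \ad(b))^{p-1}(a\otimes 1)$ without any commutation hypothesis still yields a sum of terms of the form $\ad(c_{1})\ad(c_{2})\cdots\ad(c_{p-1})(a) \otimes X^{j}$, where each $c_{k} \in \{a,b\}$. Since $p-1 \ge 1$, each such term is a nested commutator and hence lies in $L' \otimes F[X]$. Reading off the coefficient of $X^{i-1}$ shows $i s_i(a,b) \in L'$, and since $1 \le i \le p-1$ is invertible in $F$, we conclude $s_i(a,b) \in L'$.

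The only place requiring any real thought is part (2); the rest is essentially bookkeeping with the definitions. I expect the main obstacle, if any, to be the careful justification that $\ad(a)^k\ad(b)^{p-1-k}(a) = 0$ for all $k \ge 1$ — but this reduces immediately to $\ad(a)(a)=0$ combined with commutativity of $\ad(a)$ and $\ad(b)$ under the hypothesis $[a,b]=0$.
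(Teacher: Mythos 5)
Your proposal is correct and follows essentially the same route as the paper: part (1) is identical, and parts (2) and (3) both work directly from the defining expansion of the $s_i(a,b)$, with (3) spelling out what the paper calls immediate from the definition. The only difference is cosmetic: in (2) the paper avoids the binomial expansion entirely by observing that already the first application of $\ad(a\otimes X+b\otimes 1)$ annihilates $a\otimes 1$ (since $[a,a]=0$ and $[b,a]=0$), so the $(p-1)$st power vanishes at once.
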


\begin{proof} 
(1) Since $\ad(a)b=0$,  $\ad(a)^{p^r}b=0$, that is, $[a^{\scp^r}, b]=0$.  But $[b,a^{\scp^r}] = 0$ implies that $[b^{\scp^s}, a^{\scp^r} ]=0$.

(2) Since $\ad(a \otimes X+b \otimes 1)(a \otimes 1) = 0$, we have $s_i(a,b)=0$ for all $i$.

(3) Follows immediately from the definition.  (In fact, by \cite[Lemma 2.1.2]{SF}, $s_i(a,b) \in L^p$.) 
\end{proof}

\begin{lemma} \label{abid} Let $(L,\p)$ be a restricted Lie algebra and let $A$ be an abelian ideal of $L$.  Then there exists a $p$-operation $\pd$ on $L$ such that $a^\scpd = 0$ for all $a \in A$.
\end{lemma}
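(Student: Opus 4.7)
The plan is to construct $\pd$ as a perturbation of $\p$: find an additive, $p$-semilinear map $f: L \to Z(L)$ (where $Z(L)$ is the center) such that $f(a) = -a^\scp$ for every $a \in A$, and then define $a^\scpd := a^\scp + f(a)$.

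First I would verify that the naive map $g: A \to L$, $g(a) = -a^\scp$, actually lands in $Z(L)$ and has the right algebraic structure. Because $A$ is an abelian ideal, for every $a \in A$ and $x \in L$ we have $[a,x] \in A$, hence $\ad(a)^2(x) \in [A,A]=0$, so $\ad(a)^2 = 0$ and therefore $\ad(a^\scp) = \ad(a)^p = 0$. Thus $a^\scp \in Z(L)$. Moreover, for $a,b \in A$ we have $[a,b]=0$, so Lemma \ref{siab}(2) gives $(a+b)^\scp = a^\scp + b^\scp$, and $(\lambda a)^\scp = \lambda^p a^\scp$ is part of the axioms. Hence $g$ is additive and $p$-semilinear.

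Next I extend $g$ to an additive, $p$-semilinear map $f: L \to Z(L)$. Choose an $F$-basis $\{a_i\}$ of $A$ and complete it to a basis $\{a_i\} \cup \{c_j\}$ of $L$. Define $f(a_i) := -a_i^\scp$, $f(c_j) := 0$, and extend by the rule $f\bigl(\sum \lambda_i a_i + \sum \mu_j c_j\bigr) := \sum \lambda_i^p f(a_i) + \sum \mu_j^p f(c_j)$. This produces an additive, $p$-semilinear map into $Z(L)$. A short check using the additivity of $g$ on $A$ established above shows $f$ agrees with $g$ on all of $A$, not merely on the chosen basis.

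Finally I verify that $a^\scpd := a^\scp + f(a)$ is a $p$-operation. Since $f(a) \in Z(L)$, we have $\ad(a^\scpd) = \ad(a^\scp) = \ad(a)^p$; $p$-semilinearity gives $(\lambda a)^\scpd = \lambda^p a^\scpd$; and the $s_i$ are unchanged because they depend only on iterated brackets, so $(a+b)^\scpd = a^\scpd + b^\scpd + \sum s_i(a,b)$ follows from the corresponding identity for $\p$ together with additivity of $f$. For $a \in A$, $a^\scpd = a^\scp - a^\scp = 0$, as required. The construction is essentially routine; the only point needing care is that the $p$-semilinear extension $f$ really does recover $-a^\scp$ on every element of $A$, which is where one uses Lemma \ref{siab}(2).
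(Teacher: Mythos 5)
Your proof is correct, and it reaches the conclusion by a somewhat different route than the paper. Both arguments turn on the same two observations: that $\ad(a)^2=0$ for $a\in A$ (whence $\ad(a)^p=0$), and that Lemma \ref{siab}(2) upgrades the vanishing of the new $p$-operation on a basis of $A$ to vanishing on all of $A$. The difference is in how the existence of the modified $p$-operation is justified. The paper simply invokes Jacobson's theorem \cite[Theorem 2.2.3]{SF}: since $\ad(a_i)^p=0=\ad(0)$ for the basis elements $a_i$ of $A$, one may prescribe $a_i^\scpd=0$ there (keeping the old values on the complementary basis vectors) and a $p$-operation with these values exists. You avoid that citation by exhibiting $\pd$ explicitly as $\p$ perturbed by a $p$-semilinear map $f:L\to Z(L)$; in effect you reprove, in the special case needed, the standard fact that the $p$-operations on a fixed Lie algebra form a torsor under the $p$-semilinear maps into the centre. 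Your key extra step --- that $a^\scp\in Z(L)$ for $a\in A$, so the perturbation is central and leaves every $\ad(a^\scpd)=\ad(a)^p$ intact --- is sound, and your check that $f$ restricted to $A$ recovers $-a^\scp$ correctly uses Lemma \ref{siab}(2). The result is a self-contained proof at the cost of some length; two small points worth making explicit are that the additivity of $f$ on all of $L$ rests on $(\lambda+\mu)^p=\lambda^p+\mu^p$ in characteristic $p$, and that the $s_i(a,b)$ are defined purely from the Lie bracket, which is why they are shared by $\p$ and $\pd$.
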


\begin{proof} Take a basis $\{a_1, \dots, a_r\}$ of $A$ and extend this to a basis $\{a_1, \dots, a_n\}$ of $L$.  Put $b_i = a_i^\scp$.  For $i=1, \dots, r$, we have $\ad(a_i)^2=0$.  We replace these $b_i$ with $0$.  By Jacobson's Theorem \cite[Theorem 2.2.3]{SF}, there exists a $p$-operation $\pd$ on $L$ such that $a_i^\scpd = 0$ for $i = 1, \dots, r$ (and $a_j^\scpd = b_j$ for $j > r$). From Lemma \ref{siab}(2), it then follows that $a^\scpd = 0$ for all $a \in A$.  
\end{proof}

\begin{cor} \label{pid} Let $(L,\p)$ be a restricted Lie algebra.  Then there exists a $p$-operation $\pd$ on $L$ such that every abelian minimal ideal of $L$ is a \pd-ideal.
\end{cor}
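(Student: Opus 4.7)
The plan is to reduce the corollary to Lemma \ref{abid} by exhibiting a single abelian ideal of $L$ that contains every abelian minimal ideal.

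First, I would note that if $A_1$ and $A_2$ are two distinct minimal ideals of $L$, then $A_1 \cap A_2$ is an ideal properly contained in each, hence zero by minimality. Since $[A_1,A_2]$ is an ideal of $L$ contained in $A_1\cap A_2$, this gives $[A_1,A_2] = 0$. Consequently, letting $\{A_i\}_{i\in I}$ be the (necessarily finite, by the dimension of $L$) collection of all abelian minimal ideals of $L$, the sum $A = \sum_{i\in I} A_i$ is itself an abelian ideal: any bracket $[a,b]$ with $a=\sum a_i$ and $b=\sum b_j$, $a_i,b_j \in A_i,A_j$, expands into summands $[a_i,b_j]$ that vanish either because $A_i$ is abelian (when $i=j$) or because distinct minimal ideals commute (when $i\ne j$).

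Now I would apply Lemma \ref{abid} to the abelian ideal $A$ to produce a $p$-operation $\pd$ on $L$ satisfying $a^\scpd = 0$ for all $a \in A$. Since each abelian minimal ideal $A_i$ is contained in $A$, we have $a^\scpd = 0 \in A_i$ for every $a \in A_i$, so $A_i$ is closed under $\pd$ and is therefore a $\pd$-ideal.

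There is no real obstacle here; the only point requiring care is the observation that distinct minimal ideals commute, which ensures that the sum of all abelian minimal ideals is again abelian so that Lemma \ref{abid} becomes applicable in a single stroke rather than requiring an iterative construction.
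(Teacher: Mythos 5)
Your proof is correct and follows essentially the same route as the paper: the sum of all abelian minimal ideals is the abelian socle $\asoc(L)$, which the paper likewise observes is an abelian ideal before applying Lemma \ref{abid}. You merely spell out the (standard) verification that distinct minimal ideals commute, which the paper takes for granted.
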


\begin{proof}  The abelian socle $\asoc(L)$ is the sum of all the abelian minimal ideals of $L$.  It is an abelian ideal.  By Lemma \ref{abid}, there exists a $p$-operation $\pd$ which is zero on $\asoc(L)$, and so, on every abelian minimal ideal.  Thus every abelian minimal ideal is a \pd-ideal.
\end{proof}

\begin{theorem} \label{res} Let $(L, \p)$ be a restricted Lie algebra of dimension $n$ over the field $F$ of characteristic $p$.  Then $L$ has a faithful completely reducible module of dimension less than or equal to $p^{n-1}$.
\end{theorem}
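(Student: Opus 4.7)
My plan is to induct on $n = \dim L$. First I apply Corollary \ref{pid} to replace $[p]$ by a $p$-operation $[p]'$ that annihilates every abelian minimal ideal of $L$. Since complete reducibility and faithfulness of $L$-modules depend only on the underlying Lie algebra and not on the $p$-operation, I may assume from the outset that $[p]$ itself has this property.

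The inductive step splits according to whether $L$ has an abelian minimal ideal. If it does, fix one such $A$ of dimension $d$; by the induction hypothesis applied to the restricted Lie algebra $L/A$ of dimension $n-d$, there is a faithful completely reducible $L/A$-module $W$ of dimension at most $p^{n-d-1}$, which pulls back to a completely reducible $L$-module with kernel exactly $A$. It then suffices to produce a completely reducible $L$-module $V_A$ on which $A$ acts faithfully and with $\dim V_A + \dim W \le p^{n-1}$; the direct sum $V_A \oplus W$ is then the required faithful completely reducible module, faithfulness being automatic because the kernel on the direct sum is contained in $A$ (from $W$) and zero on $A$ (from $V_A$). If $L$ has no abelian minimal ideal, the socle of $L$ is a sum of non-abelian minimal ideals, and one uses the representation theory of semisimple restricted Lie algebras to supply a faithful completely reducible module of sufficiently small dimension directly.

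The main obstacle is the construction of $V_A$. Because $a^{[p]}=0$ for every $a \in A$, each element of $A$ satisfies $a^p=0$ on every restricted $L$-module, so $A$ acts as zero on every restricted irreducible; hence $V_A$ must be built from non-restricted irreducibles. A natural strategy is to use irreducible modules for reduced enveloping algebras $u_\chi(L)$ corresponding to $p$-characters $\chi \in L^*$ that do not vanish on $A$, combining a small collection of such modules whose restrictions to $A$ together separate points of $A$. The delicate point is the dimension bookkeeping: the tight inequality $p^{n-d-1} + \dim V_A \le p^{n-1}$ forces $\dim V_A \le p^{n-d-1}(p^d-1)$, so one cannot afford to use the whole of any $u_\chi(L)$ (each of which already has dimension $p^n$), but must select small-dimensional irreducible constituents. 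Verifying that such irreducibles can always be found, with dimensions summing within the permitted bound, is the crux of the argument.
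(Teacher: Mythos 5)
Your proposal stops short of a proof at exactly the point you yourself identify as ``the crux'': the module $V_A$ on which $A$ acts faithfully, with dimension small enough that $\dim V_A + p^{n-d-1} \le p^{n-1}$, is never constructed. Your preliminary analysis is sound (after Corollary \ref{pid} every element of $A$ acts nilpotently on restricted modules, so nonzero $p$-characters are forced), but the plan of assembling several small irreducible constituents of various $u_\chi(L)$ whose restrictions to $A$ jointly separate points is not carried out, and it is not clear it can be within your budget: for $d=1$ you need $\dim V_A \le p^{n-2}(p-1)$, while the natural candidates (composition factors of a module induced from a character of $A$) a priori only come with the bound $p^{n-1}$ on their total dimension. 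The second branch of your case split is also a gap: ``the representation theory of semisimple restricted Lie algebras'' is not an off-the-shelf tool in characteristic $p$, and in any case a sum of non-abelian minimal ideals need not consist of \p-ideals, so the quotients you would want to induct on need not be restricted.

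The paper's proof avoids the additive bookkeeping entirely by organizing the induction around minimal \emph{\p-ideals} rather than minimal ideals. If there are two distinct minimal \p-ideals, the two restricted quotients supply modules of dimension $\le p^{n-2}$ each, and $2p^{n-2}\le p^{n-1}$. If there is a unique minimal \p-ideal $A$, one shows (using the normalization from Corollary \ref{pid}) that $A$ is an abelian minimal ideal, takes a linear form $c$ with $c(A)\ne 0$, and forms the single $c$-induced module $V=u(L,c)\otimes_{u(A,c|A)}W$ of dimension $p^{\dim(L/A)}\le p^{n-1}$. The key step you are missing is that the semisimplification $V_0$ of $V$ is already faithful for \emph{all} of $L$ --- no auxiliary module for $L/A$ is needed: if some minimal ideal $B$ killed every composition factor, $B$ would act nilpotently on the faithful module $V$, hence be nilpotent by Engel's theorem, hence abelian, hence a \p-ideal, contradicting the uniqueness of $A$. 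This is what lets the whole bound $p^{n-1}$ be spent on one induced module instead of being split between $A$ and $L/A$.
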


\begin{proof} By Corollary \ref{pid}, we may suppose that every abelian minimal ideal is a \p-ideal.  The result holds for $n=1$.  We use induction over $n$. 

 Suppose that $A_1, A_2$ are distinct minimal \p-ideals of $L$.  Then $L/A_i$ is a restricted Lie algebra.  Since $\dim(L/A_i) \le n-1$, $L/A_i$ has a faithful completely reducible module $V_i$ with $\dim(V_i) \le p^{n-2}$.  But $V_1 \oplus V_2$ is a faithful completely reducible $L$-module and $\dim(V_1 \oplus V_2) \le 2p^{n-2} \le p^{n-1}$.

Suppose that $A$ is the only minimal \p-ideal of $L$.  Let $B \subseteq A$ be a minimal ideal of $L$.  The representation of $L$ on $B$ is a \p-representation and its kernel $K$ is a \p-ideal. Either $K=0$ or $K \supseteq A$.    If $K=0$, then $B$ is a faithful completely reducible $L$-module and the result holds.  So we may suppose that $K \supseteq A$.  But this implies that $B$ is abelian.  By our choice of \p, this implies that $B$ is a \p-ideal and so, that $B=A$.  Hence we may assume that our only minimal \p-ideal $A$ is also a minimal ideal and is abelian.  

We can take a linear map $c : L \to F$ such that $c(A) \ne 0$. Let $W = \langle w \rangle$ be the $1$-dimensional $A$-module with the action $aw = c(a)w$ for all $a \in A$.  Then $W$ has character $c|A$.   We form the $c$-induced module $V = u(L,c) \otimes_{u(A,c|A)} W$. See \cite[Chapter 5]{SF}.  By \cite[Proposition 5.6.2]{SF}, $\dim(V) = p^{\dim(L/A)} \le p^{n-1}$.  Let $V_0$ be the direct sum of the composition factors of $V$.  Then $\dim(V_0) \le p^{n-1}$.  Note that $A$ acts non-trivially on $V_0$ since it acts non-trivially on the irreducible $A$-submodule $1 \otimes W$ of $V$.  If $V_0$ is faithful, the result holds.

Let $\{e_1, \dots, e_k\}$ be a co-basis of $A$ in $L$.  Then by \cite[Proposition 5.6.2]{SF}, the $e_1^{r_1}e_2^{r_2} \dots e_k^{r_k} \otimes w$ with the $r_i <p$ form a basis of $V$.  For $x = \sum \lambda_i e_i +a$ with $a \in A$, $x(1 \otimes w) = \sum \lambda_i e_i \otimes w + 1 \otimes aw$.  If $x(1 \otimes w)=0$ then we must have $\lambda_i = 0$ for all $i$, that is, $x \in A$.  Thus the representation of $L$ on $V$ has kernel $\ker(V) \subseteq A$.  As $A$ is a minimal ideal and acts non-trivially, we have $\ker(V) = 0$.  Thus $V$ is faithful. 

So suppose that $V_0$ is not faithful.  Then there exists a minimal ideal $B$ whose action on every composition factor of $V$ is trivial.  Then $B$ is represented on $V$ by nilpotent linear transformations.  But $V$ is faithful, so by Engel's Theorem for algebras of linear transformations, $B$ is nilpotent.  But $B'$ is an ideal of $L$, so we must have $B'=0$.  By our choice of \p, $B$ is a \p-ideal of $L$, contrary to $A$ being the only minimal \p-ideal of $L$.  Therefore $V$ is faithful.
\end{proof}

\section{Minimal $p$-envelopes.}
Let $(L^e,\p)$ be a minimal $p$-envelope of $L$.  We investigate $\dim(L^e)$.  Note that, by \cite[Theorem 2.5.8(1)]{SF}, $\dim(L^e)$ is independent of the choice of minimal $p$-envelope.  Let $Z$ be the centre of $L^e$.  By \cite[Theorem 2.5.8(3)]{SF}, $Z \subseteq L$.  By \cite[Proposition 2.1.3(2)]{SF},   $(L^e)' \subseteq L$.

\begin{lemma} \label{idL} Let $A$ be an ideal of $L$.  Then $A$ is an ideal of $L^e$.
\end{lemma}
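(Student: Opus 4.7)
The plan is to show that the normalizer $N := \{z \in L^e : [z,A] \subseteq A\}$ of $A$ in $L^e$ coincides with all of $L^e$. This will follow once I verify that $N$ is a restricted subalgebra of $L^e$ containing $L$, since by the definition of a $p$-envelope (see \cite[Section 2.5]{SF}), $L$ generates $L^e$ as a restricted Lie algebra.

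A standard Jacobi identity argument shows that $N$ is an $F$-subspace of $L^e$ closed under the bracket, hence a Lie subalgebra. Since $A$ is assumed to be an ideal of $L$, we have $L \subseteq N$.

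The decisive step is that $N$ is closed under the $p$-operation $\p$. If $z \in N$, then $\ad(z)(A) \subseteq A$ by definition, and the defining property $\ad(z^\scp) = \ad(z)^p$ gives
$$\ad(z^\scp)(A) = \ad(z)^p(A) \subseteq A,$$
so $z^\scp \in N$. Thus $N$ is a restricted subalgebra of $L^e$ containing $L$, and we conclude $N = L^e$. In other words $[L^e, A] \subseteq A$, so $A$ is an ideal of $L^e$.

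Because the defining relation $\ad(z^\scp) = \ad(z)^p$ and the generating property of a $p$-envelope do all the work, there is no significant obstacle to this proof. In particular, no induction, no case analysis, and no appeal to the minimality of $L^e$ (as opposed to the mere $p$-envelope property) is required; the conclusion holds for any $p$-envelope of $L$.
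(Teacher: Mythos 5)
Your proof is correct and is essentially the paper's own argument: the paper's one-line proof observes that $\{x\in L^e \mid \ad(x)A \subseteq A\}$ is a $[p]$-subalgebra of $L^e$ containing $L$, hence equals $L^e$; you have simply spelled out the verification (Jacobi identity for closure under brackets, $\ad(z^{\scp})=\ad(z)^p$ for closure under the $p$-operation). Your closing remark that only the generating property of a $p$-envelope, not minimality, is needed is also accurate.
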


\begin{proof} The set $\{x\in L^e \mid \ad(x)A \subseteq A\}$ is a \p-subalgebra of $L^e$ and contains $L$.
\end{proof}

\begin{lemma} \label{sums} Let $a_1, \dots, a_r \in L^e$ and $\lambda_1, \dots , \lambda_r \in F$.  Then
$$ (\sum_{i=1}^r \lambda_i a_i)^\scp = \sum_{i=1}^r \lambda_i^p a_i^\scp + k$$
for some $k \in L$. 
\end{lemma}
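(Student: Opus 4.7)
The plan is to proceed by induction on $r$, using the restricted-Lie-algebra sum formula
$(a+b)^{[p]} = a^{[p]} + b^{[p]} + \sum_{i=1}^{p-1} s_i(a,b)$
together with the crucial observation recorded just before this lemma, namely $(L^e)' \subseteq L$. Since Lemma \ref{siab}(3) tells us that each $s_i(a,b)$ lies in $(L^e)'$, every $s_i$-term that arises when we expand a sum will automatically be in $L$, and a finite sum of elements of $L$ is still in $L$.

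For the base case $r=1$, the identity $(\lambda_1 a_1)^{[p]} = \lambda_1^p a_1^{[p]}$ is the scalar-homogeneity axiom, with $k=0$. For the inductive step, assume the result holds for sums of $r-1$ terms. Write $b = \sum_{i=1}^{r-1}\lambda_i a_i$ and apply the sum formula to $b + \lambda_r a_r$:
\begin{equation*}
\Bigl(\sum_{i=1}^r \lambda_i a_i\Bigr)^{[p]} = b^{[p]} + (\lambda_r a_r)^{[p]} + \sum_{i=1}^{p-1} s_i(b, \lambda_r a_r).
\end{equation*}
By the inductive hypothesis, $b^{[p]} = \sum_{i=1}^{r-1}\lambda_i^p a_i^{[p]} + k'$ for some $k' \in L$, and by scalar homogeneity $(\lambda_r a_r)^{[p]} = \lambda_r^p a_r^{[p]}$. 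Hence
\begin{equation*}
\Bigl(\sum_{i=1}^r \lambda_i a_i\Bigr)^{[p]} = \sum_{i=1}^r \lambda_i^p a_i^{[p]} + \Bigl(k' + \sum_{i=1}^{p-1} s_i(b, \lambda_r a_r)\Bigr).
\end{equation*}

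It remains to check that the bracketed correction lies in $L$. We have $k' \in L$ by induction, and each $s_i(b, \lambda_r a_r)$ lies in $(L^e)'$ by Lemma \ref{siab}(3). Since $(L^e)' \subseteq L$ by \cite[Proposition 2.1.3(2)]{SF} (quoted in the paragraph preceding Lemma \ref{idL}), the entire correction term lies in $L$, giving the desired $k$. There is no real obstacle here; the only subtlety is recognising that the containment $(L^e)' \subseteq L$ is exactly what is needed to absorb every $s_i$ that the sum formula produces.
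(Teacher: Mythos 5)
Your proof is correct and follows essentially the same route as the paper's: induction on $r$, absorbing the $s_i$ correction terms into $L$ via Lemma \ref{siab}(3) and the containment $(L^e)' \subseteq L$. The only cosmetic difference is that you start the induction at $r=1$ while the paper anchors it at the $r=2$ case.
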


\begin{proof} From the definition of a $p$-operation, we have $(\lambda_i a_i)^\scp = \lambda_i^p a_i^\scp$.  The result holds for $r=2$ by Lemma \ref{siab}(3) since $(L^e)' \subseteq L$.  So $(\lambda_1 a_1 + \dots + \lambda_r a_r)^\scp = (\lambda_1 a_1 + \dots \lambda_{r-1} a_{r-1})^\scp + \lambda_r^p a_r^\scp +k_1$ for some $k_1 \in L$.  But by induction, $(\lambda_1 a_1 + \dots + \lambda_{r-1} a_{r-1})^\scp = \lambda_1^p a_1^\scp+ \dots + \lambda_{r-1}^p a_{r-1}^\scp +k_2$ for some $k_2 \in L$.  The result follows.
\end{proof}

\begin{lemma} \label{powers} Let $x \in L$ and let $V= \langle x^{\scp^i} \mid i=1,2, \dots \rangle$ be the space spanned by the $x^{\scp^i}$.  Then $\dim((V+L)/L) \le n$.
\end{lemma}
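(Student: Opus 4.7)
The plan is to push everything into $\mathrm{End}(L)$ via the adjoint action and invoke Cayley--Hamilton. Since $x \in L$ and $L$ is a subalgebra, the operator $T := \ad(x)|_L$ is a well-defined endomorphism of the $n$-dimensional space $L$. Using the defining identity $\ad(y^{\scp}) = \ad(y)^p$, we have $\ad(x^{\scp^i})|_L = T^{p^i}$, so each of these restrictions lies in the associative subalgebra of $\mathrm{End}(L)$ generated by $T$. By Cayley--Hamilton, that subalgebra has dimension at most $n$.

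Consider then the linear map $\phi : V + L \to \mathrm{End}(L)$ sending $y$ to $\ad(y)|_L$; this is well-defined because the spanning elements $x^{\scp^i}$ of $V$ and all of $L$ preserve $L$ under the adjoint action. By the previous paragraph $\dim \phi(V) \le n$. If I can show that $\ker \phi \subseteq L$, then $V \cap \ker \phi \subseteq V \cap L$ and consequently
\[ \dim\bigl((V+L)/L\bigr) = \dim(V/(V \cap L)) \le \dim(V/(V \cap \ker \phi)) = \dim \phi(V) \le n, \]
which is the desired bound.

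The only real content is therefore the kernel inclusion, and this is the step I expect to need the most care. Suppose $y \in V + L$ satisfies $[y, L] = 0$. For each $\ell \in L$ and each $i \ge 0$, the identity $\ad(\ell^{\scp^i}) = \ad(\ell)^{p^i}$ together with $\ad(\ell)(y) = 0$ gives $[\ell^{\scp^i}, y] = \ad(\ell)^{p^i}(y) = 0$, so $y$ commutes with every $\ell^{\scp^i}$. A short verification (using $\ad(z^{\scp}) = \ad(z)^p$ in the same way) shows that the centralizer $C := \{z \in L^e : [y, z] = 0\}$ is closed under $\scp$ as well as the bracket, so $C$ is a restricted subalgebra of $L^e$. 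Since $C \supseteq L$ and $L^e$ is the restricted subalgebra generated by $L$, we must have $C = L^e$, hence $y \in Z(L^e) \subseteq L$ by \cite[Theorem 2.5.8(3)]{SF}. Once this kernel inclusion is in hand, the Cayley--Hamilton bound finishes the argument.
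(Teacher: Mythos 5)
Your proof is correct, and at the decisive step it takes a genuinely different route from the paper's. Both arguments begin identically: since $\ad(x^{\scp^i})|L = (\ad(x)|L)^{p^i}$, these maps all lie in the algebra generated by $\ad(x)|L$ inside $\Hom(L,L)$, which has dimension at most $n$ by Cayley--Hamilton. The difference is in how that bound is pulled back to $(V+L)/L$. The paper takes the first linear dependence among the $\ad(x^{\scp^i})|L$, forms the corresponding element $y$ with $\ad(y)L=0$, and deduces only that $\ad(y)^2L^e=0$ (since $\ad(y)L^e\subseteq L$), hence that $y^\scp\in Z$ rather than $y\in Z$; it must then run an induction over $s$, via Lemma \ref{siab}(1),(2), to confine every $x^{\scp^{r+s}}$ to $\langle x^\scp,\dots,x^{\scp^{r+1}}\rangle+Z$. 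You instead prove the stronger fact that the centralizer of $L$ in $L^e$ is exactly $Z$: a centralizer is closed under brackets and, by $\ad(z^\scp)=\ad(z)^p$, under the $p$-operation, so once it contains $L$ it contains the $p$-subalgebra generated by $L$, which is all of $L^e$ by the definition of a $p$-envelope. Combined with $Z\subseteq L$ (which the paper already quotes from \cite[Theorem 2.5.8(3)]{SF} for minimal $p$-envelopes), this gives $\ker\phi\subseteq L$, and the bound falls out of $\dim\bigl((V+L)/L\bigr)=\dim\bigl(V/(V\cap L)\bigr)\le\dim\phi(V)\le n$ with no induction at all. Your version is shorter and isolates the reusable identity $C_{L^e}(L)=Z(L^e)$, at the small cost of explicitly invoking that $L$ $p$-generates $L^e$; the paper's version settles for the weaker conclusion $y^\scp\in Z$ and compensates with the recursive computation of $p$-powers. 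Both yield the same bound.
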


\begin{proof}

We have $\ad(x)L^e \subseteq L$.  The maps $\ad(x^{\scp^i})| L \to L$ are powers of $\ad(x)|L$.  So they span a subspace of $\Hom(L,L)$ of dimension at most $n$.  For some $r \le n-1$, the maps $\ad(x)|L, \ad(x^\scp)|L, \dots, \ad(x^{\scp^r})|L$ are linearly independent with 
$$\ad(x^{\scp^{r+1}})|L = \sum_{i=0}^r \lambda_i \ad(x^{\scp^i})|L$$
for some $\lambda_i \in F$.  Put $y = x^{\scp^{r+1}} - \sum_{i=0}^r \lambda_i x^{\scp^i}$. Then $\ad(y)L^e \subseteq L$ and $\ad(y)L = 0$.  Thus $\ad(y)^pL^e = 0$ and it follows that $y^\scp \in Z \subseteq L$.

By Lemma \ref{siab}(1) and (2),  $y^\scp = x^{\scp^{r+2}} - \sum_{i=0}^r \lambda_i^p x^{\scp^{i+1}}$.  Thus $x^{\scp^{r+2}} \in \langle x^\scp, \dots, x^{\scp^{r+1}} \rangle + Z$. Suppose that  $x^{\scp^{r+s}} \in \langle x^\scp, \dots, x^{\scp^{r+1}} \rangle + Z$.  Then $x^{\scp^{r+s}} = \mu_1x^\scp + \dots + \mu_{r+1}x^{\scp^{r+1}} +z$ for some $\mu_i \in F$ and $z \in Z$.  By Lemma \ref{siab}(1) and (2), $x^{\scp^{r+s+1}} =  \mu_1^p x^{\scp^2} + \dots + \mu_{r+1}^px^{\scp^{r+2}} +z^\scp$.  Since $x^{\scp^{r+2}} \in \langle x^\scp, \dots, x^{\scp^{r+1}} \rangle + Z$ and $z^\scp \in Z$, we have $x^{\scp^{r+s+1}} \in \langle x^\scp, \dots, x^{\scp^{r+1}} \rangle + Z$.  It follows by induction over $s$, that $\langle x^\scp, \dots, x^{\scp^{r+1}} \rangle + Z = V+Z$ and so, that $\dim((V+L)/L) \le r+1 \le n$.
\end{proof}

\begin{theorem} \label{env} Let $L$ be a Lie algebra of dimension $n$ over the field $F$ of characteristic $p>0$ and let $A$ be an abelian ideal of $L$ with $\dim(A)=d$.  Let $(L^e, \p)$ be a minimal $p$-envelope of $L$. Then $\dim(L^e) \le n(n-d+1)$.
\end{theorem}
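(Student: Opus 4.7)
The plan is to find a spanning set for $L^e/L$ of size at most $n(n-d)$, from which $\dim(L^e)\le n(n-d+1)$ is immediate.

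First I would fix any basis $x_1,\dots,x_n$ of $L$ and show
$$L^e = L + \sum_{i=1}^n\sum_{k\ge 1} F\,x_i^{\scp^k}.$$
The right-hand side contains $L$; it is Lie-closed because $\ad(x^{\scp^k})|_L = \ad(x)^{p^k}|_L$ preserves $L$; and it is $\p$-closed by repeated application of Lemma~\ref{sums}, since that result shows the $\p$-image of a linear combination agrees modulo $L$ with the sum of the individual $\p$-images with $p$-th powered coefficients. Being a $p$-subalgebra of $L^e$ containing $L$, by minimality it equals $L^e$.

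Next I would extend a basis $a_1,\dots,a_d$ of $A$ to a basis $a_1,\dots,a_d,y_1,\dots,y_{n-d}$ of $L$ and argue that the $a_i$ contribute nothing to this spanning set modulo $L$. For any $a\in A$, Lemma~\ref{idL} makes $A$ an ideal of $L^e$, so $\ad(a)L^e\subseteq A$; combined with $\ad(a)A=0$, this gives $\ad(a)^2=0$ on $L^e$, whence $\ad(a^\scp)=\ad(a)^p=0$ and $a^\scp\in Z\subseteq L$. Since $Z$ is $\p$-stable (as $\ad(z)=0$ implies $\ad(z^\scp)=\ad(z)^p=0$), iteration gives $a^{\scp^k}\in Z\subseteq L$ for all $k\ge 1$. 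Hence $L^e/L$ is already spanned by the images of $y_j^{\scp^k}$ for $1\le j\le n-d$ and $k\ge 1$.

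Finally, Lemma~\ref{powers} applied to each $y_j$ bounds its contribution by $n$, giving $\dim(L^e/L)\le n(n-d)$ and the desired bound. The step I expect to be the main obstacle is the first: establishing that $L$ together with the $F$-span of the $\p$-iterates of a basis is a $p$-subalgebra. The Lie-bracket closure is routine, but the $\p$-closure requires carefully tracking, via Lemma~\ref{sums}, how the correction terms land back in $L$ rather than introducing $\p$-powers outside the claimed sum.
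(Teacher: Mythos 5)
Your proof is correct and follows the same overall strategy as the paper: write $L^e$ as $L$ plus the span of the $\p$-iterates of a basis of $L$ adapted to $A$, check that this is a $p$-subalgebra (hence all of $L^e$ by minimality) using $(L^e)'\subseteq L$ and Lemma~\ref{sums}, discard the contributions coming from $A$, and bound each remaining contribution by $n$ via Lemma~\ref{powers}. The one place you genuinely diverge is in killing the $A$-contributions: the paper invokes Lemma~\ref{abid} to replace $\p$ by a $p$-operation vanishing on $A$, so that $a^{\scp^j}=0$ outright for $a\in A$ and $j>0$, whereas you keep the given $\p$ and observe that $\ad(a)^2=0$ on $L^e$ forces $a^\scp\in Z\subseteq L$, and then that $Z$ is $\p$-stable, so all higher iterates also land in $L$. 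Your variant is slightly cleaner in that it sidesteps the (unaddressed in the paper, though harmless for an upper bound) question of whether $(L^e,\pd)$ remains a minimal $p$-envelope of $L$ after the $p$-operation is altered; the paper's variant is shorter on the page. One small repair: your stated reason for Lie-closure of the spanning set, namely that $\ad(x^{\scp^k})$ preserves $L$, only covers brackets of a $\p$-iterate with an element of $L$; for the bracket of two $\p$-iterates you should appeal directly to $(L^e)'\subseteq L$, as the paper does.
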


\begin{proof} We choose a basis $\{e_1, \dots, e_n\}$ of $L$ with $e_{n-d+1}, \dots, e_n \in A$.  By Lemma \ref{abid}, we may suppose that $a^\scp = 0$ for all $a \in A$.  Then the $e_i^{\scp^j} = 0$ for $i > n-d$ and $j > 0$.  For each $i$, let $V_i$ be the subspace of $L^e$ spanned by  the $e_i^{\scp^j}$ (including $j=0$) and let $V = \sum_iV_i$.   Then $V \supseteq L$ and $V/L = \sum_{i=1}^{n-d} (V_i+L)/L$.  By Lemma \ref{powers}, $\dim(V_i+L/L) \le n$, so  $\dim(V/L) \le n(n-d)$ giving $\dim(V) \le n(n-d+1)$.   But $(L^e)' \subseteq L$, so $V$ is a subalgebra of $L^e$.  By Lemma \ref{sums}, $v^\scp \in V$ for all $v \in V$.  Thus $V$ is a $p$-envelope of $L$, so $V = L^e$. \end{proof}

\section{The main result.}

\begin{proof}[Proof of Theorem \ref{main}]  We use induction over $n$.  Suppose that $A_1,A_2$ are distinct minimal ideals of $L$.  Then $L/A_i$ has a faithful completely reducible module $V_i$ with $\dim(V_i) \le p^{(n-1)^2 -1}$ and $V_1 \oplus V_2$ is a module satisfying all the requirements.  So suppose that $A$ is the only minimal ideal of $L$.  If $A$ is non-abelian, then $A$ is an $L$-module satisfying the requirements, so suppose that $A$ is abelian.

  We take a minimal $p$-envelope $(L^e,\p)$ of $L$.  As $\dim(A) \ge 1$, by Theorem \ref{env}, we have $\dim(L^e) \le n^2$.  By Theorem \ref{res}, $L^e$ has a faithful completely reducible module $V$ with $\dim(V) \le p^{n^2-1}$.  There is some irreducible summand $V_0$ of $V$ on which $A$ acts non-trivially.  By Lemma \ref{idL}, $A$ is an ideal of $L^e$ and it follows that  $V_0^A := \{v \in V_0 \mid Av=0\}$ is an $L^e$-submodule of $V_0$.  Therefore $V_0^A =0$.  Let $V_1$ be an irreducible $L$-submodule of $V_0$.  Since $V_1^A \subseteq V_0^A$, we have $V_1^A = 0$. But $A$ is the only minimal ideal of $L$. As it is not in the kernel of the representation of $L$ on $V_1$,  $V_1$ is a faithful $L$-module.  
\end{proof}

\begin{remark} We have a function $f: \N \to \N$, namely $f(n)=p^{n^2-1}$, such that every Lie algebra of dimension $n$ over a field of characteristic $p$ has a faithful completely reducible module of dimension less than or equal to $f(n)$.  We cannot replace this with a function independent of $p$, for suppose that $f:\N \to \N$ were claimed to be such a function.  The smallest faithful completely reducible module for the non-abelian algebra of dimension $2$ has dimension $p$, so this algebra over a field of characteristic $p > f(2)$ is a counterexample.  This does not rule out the possibility, if we drop the requirement of complete reducibility, of there being a function $f$ independent of $p$ such that every Lie algebra of dimension $n$ over a field of non-zero characteristic has a faithful module of dimension less than or equal to $f(n)$.
\end{remark}

It is not claimed that any of the bounds given in this paper are best possible.

\bibliographystyle{amsplain}

\begin{thebibliography}{10}

\bibitem{extras} D. W. Barnes, \textit{Ado-Iwasawa extras}, J. Aust. Math. Soc.
\textbf{78} (2005), 407--421. 

\bibitem{faithful} D. W. Barnes, \textit{Faithful representations of Leibniz algebras,} Proc. Amer. Math. Soc. \textbf{141} (2013), 2991--2995.  Also arXiv:1111.2627.  

\bibitem{Hoch} G. Hochschild, \textit{An addition to Ado's Theorem,} Proc. Amer. Math. Soc. \textbf{17} (1966), 531--533.

\bibitem{SF} H. Strade and R. Farnsteiner, \textit{Modular Lie algebras and their representations},  Marcel  Dekker, Inc., New York-Basel, 1988.
\end{thebibliography}

\end{document}